\theoremstyle{plain}
\newtheorem{definition}{Definition}
\newtheorem{theorem}{Theorem}
\newtheorem{lemma}[theorem]{Lemma}
\newcommand{\cblue}[1]{{\textcolor{black}{#1}}} 
\newcommand{\cccblue}[1]{{\textcolor{black}{#1}}}
\newcommand{\cred}[1]{{\textcolor{black}{#1}}} 
\newcommand{\cgreen}[1]{{\textcolor{black}{#1}}}
\newcommand{\ccgreen}[1]{{\textcolor{black}{#1}}}
\title{Orientation data for coherent sheaves on the local projective plane}
\author{Yun Shi}{\thanks{The author was partially supported by the NSF Grant DMS-1440140 while the author was in residence at the MSRI in Berkeley during the Spring 2018 semester. The author was also partially supported as a research assistant through her thesis advisor's NSF Grant DMS-1802242}\thanks{Department of Mathematics, University of Illinois, Urbana, IL, 61801. yunshi2@illinois.edu}}
\date{}							
\begin{document}
\maketitle
\begin{abstract}
In this note, we show that the canonical orientation data defined in \cite{Dav} on the quiver hearts are compatible under the autoequivalence $\_\otimes\pi^*O(1)$, and hence glue to give an orientation data for the stack of coherent sheaves on the local projective plane .  
\end{abstract} 
\section{Introduction}

Orientation data is an important ingredient in the definition of Motivic Donaldson-Thomas (DT) theory \cite{BJM19} \cite{KS}. Roughly speaking, an orientation data is a square root of the virtual canonical bundle on a moduli stack satisfying some compatibility conditions. \cgreen{A choice of orientation data is necessary to define a global Motivic DT invariants \cite{BJM19}}. As illustrated in \cite{Dav}, a choice of orientation data \cgreen{satisfying the cocycle condition (see Definition \ref{def1})} is necessary for the map from the Hall algebra of stack functions to the Grothendieck ring of motivic weights to be a ring homomorphism. The existence of orientation data had been shown in many studies. For example it is shown in \cite{Dav} that there exists a canonical orientation data for the moduli stack of quiver representations, and in \cite{Tod} that the trivial bundle on the moduli stack of finite length sheaves on a CY 3-fold defines an orientation data for that moduli stack.

In this note, we consider the orientation data on local $\mathbb{P}^2$: $Tot(O_{\mathbb{P}^2}(-3))$, we denote this space by $Y$, and $\pi:Y\rightarrow \mathbb{P}^2$ be the projection. 
There is a derived equivalence \cite{Bri05}:
\begin{equation*}
\label{equ1}
RHom(\pi^*(O_{\mathbb{P}^2}\oplus O_{\mathbb{P}^2}(1)\oplus O_{\mathbb{P}^2}(2)), \_):D^b(Y)\rightarrow D^b(Mod-A)
\end{equation*}
where $A\simeq End(\pi^*(O_{\mathbb{P}^2}\oplus O_{\mathbb{P}^2}(1)\oplus O_{\mathbb{P}^2}(2)))$, and $Mod-A$ is the category of right $A$ modules. The noncommutative algebra $A$ can be described as the Jacobi algebra of a quiver with potential $(Q, W)$, i.e. $A\simeq Q_1/\partial W$. Here $Q$ is the following quiver:
\begin{equation}
\label{equ2}
\begin{tikzcd}[arrow style=tikz,>=stealth,row sep=4em]
\circ \arrow[rr, shift left=.8ex]
\arrow[rr]
\arrow[rr, shift right=.8ex, swap, "a_i"]
&& \circ \arrow[dl,shift left=.8ex]
\arrow[dl]
\arrow[dl,shift right=.8ex, swap, "b_j"]\\
& \circ \arrow[ul,shift left=.8ex]
\arrow[ul]
\arrow[ul,shift right=.8ex, swap, "c_k"]
\end{tikzcd}
\end{equation}
and $Q_1$ is the path algebra of $Q$. The potential is given by $W = c_3b_2a_1-c_2b_3a_1+c_1b_3a_2-c_3b_1a_2+c_2b_1a_3-c_1b_2a_3$. 

Let $F\in Coh(Y)$, then $F(n)$ has no higher cohomology for $n$ large enough. This implies that $F(n+2)\in Mod-A$ for $n$ large enough, see Lemma \ref{lemma2}. We denote the image of $Mod-A$ in $D^b(Y)$ under the autoequivalence $\otimes\pi^*O(-n)$ by $Mod-A^n$. Then one can identify subcategories of $Coh(Y)$ with subcategories of various quiver hearts $Mod-A^n$, and further transfer the canonical orientation data for these quiver hearts in \cite{Dav} to these subcategories of $Coh(Y)$. But are these orientation data compatible? In this note, we show that the answer to this question is yes, see Theorem \ref{theorem3}. This way one obtains an orientation data for the stack of coherent sheaves by gluing the canonical orientation data on the stack of $A^n$ modules.

There is another orientation data from geometry, see section 2.2. The idea is related to Remark 5.5 in \cite{MT18}, and the general statement is explained to the author by Yukinobu Toda. One may wonder the relation between the orientation data obtained from the quiver hearts and the one from geometry, it turns out they are the same, as shown in Theorem \ref{theorem4}. 

\subsection{Outline}
In section 2, we review the definition of orientation data and some facts about the derived equivalence. In section 2.1, we review the canonical orientation data defined on the quiver heart given by \cite{Dav}. In section 2.2, we review the orientation data from geometry. 

We prove the main results in section 3. In Theorem \ref{theorem3} we show that the canonical orientation data defined in \cite{Dav} restricted to $(Mod-A^n)_c$ are compatible under the autoequivalence $\_\otimes\pi^*O(1)$. Finally we show the orientation data from the quiver heart and the one from geometry are the same in Theorem \ref{theorem4}.

\subsection{Notation}
In this note, \cblue{all shemes are assumed to be noetherian schemes over $\mathbb{C}$.} All of the $A$ modules are assumed to be right $A$ modules. For the quiver $Q$ in (\ref{equ2}), we denote the arrows between vertices by $a_i$, $b_j$, $c_k$, and the constant arrow at vertex $i$ by $e_i$. Denote $P_i$ to be the projective module of $A$ generated by $e_i$, for $i=0,1,2$. Let $S$ be an arbitrary scheme, denote the projection from $S\times Y$ to $S$ by $\pi_S$, and the projection from $S\times Y$ to $Y$ by $\pi_Y$. We denoting $R\mathcal{H}om_S(\_,\_):=R\pi_{S*}R\mathcal{H}om_{O_{S\times Y}}(\_,\_)$. Denote the projection from $Y$ to $\mathbb{P}^2$ by $\pi$, if needed we abuse notation and denote $\pi\circ \pi_Y$ also by $\pi$. Denote the inclusion from $\mathbb{P}^2$ to $Y$ by $i$.

\subsection{Acknowledgement}
This note is part of my thesis work under the direction of Sheldon Katz, I would like to thank my advisor Sheldon Katz for introducing the subject to me, and tremendous helpful discussions and encouragement. I am also very grateful to Ben Davison and Yukinobu Toda for suggesting to check the compatibility of the orientation data on quiver hearts, and explaining the material in section 2 to me. In particular, I would like to thank Ben Davison for patiently answering many questions about orientation data and motivic DT theory in general, and useful suggestions on an earlier version of the paper. I would also like to thank Davesh Maulik and Sven Meinhardt for helpful conversations. \cgreen{Finally I would like to thank the referee for carefully reading the draft and many helpful suggestions.} This work was done when I was visiting MSRI for the program Enumerative Geometry Beyond Numbers (EGN) during the spring 2018 semester, I would like to thank MSRI for the excellent working environment. 
\section{Background}

\bigskip 

Let $\mathcal{C}$ be a category, in our situation, it is either the category of right $A$ modules, or the category of coherent sheaves \ccgreen{with compact support} on $Y$. Let $St$ be the stack of objects in $\mathcal{C}$. Let $M$ be the universal object on $St$. Let $St^{(2)}$ be the stack parametrizing short exact sequences of objects in $\mathcal{C}$, and $0\rightarrow M_1\rightarrow M_2\rightarrow M_3\rightarrow 0$ be the universal short exact sequence. Then we have three maps $p_i:St^{(2)}\rightarrow St$ induced by $M_i$. \cgreen{
Note that there is an isomorphism induced by the multiplicativity of determinants on triangles and Serre duality:
\begin{equation}
\label{equKS}
\begin{split}
det(R\mathcal{H}om(M_2,M_2))\otimes det(R\mathcal{H}om(M_1,M_1))^{-1}&\otimes det(R\mathcal{H}om(M_3,M_3))^{-1}\\
&\simeq det(R\mathcal{H}om(M_1,M_3))^{\otimes 2}.
\end{split}
\end{equation}}

\cgreen{With the above information, we give the definition of orientation data:}

\begin{definition} \cite{BBBBJ15, BJM19, KS}
\label{def1}
An orientation data for $\mathcal{C}$ is a line bundle L on $St$, such that 

i. $L^2\simeq det(R\mathcal{H}om_{St}(M,M))$,

ii. $p_2^*L\simeq p_1^*L\otimes p_3^*L\otimes R\mathcal{H}om_{St^{(2)}}(p_1^*M_1, p_3^*M_3)$, \cgreen{such that the square of this isomorphism is the canonical isomorphism (\ref{equKS}).}
\end{definition}

Note that this definition is slightly different from the definition in \cite{BJM19}. In \cite{BJM19}, orientation data is defined on the reduced structure of the algebraic d-critical locus.
 
Let $T^n:=\pi^*O(n)\oplus \pi^*O(n+1)\oplus \pi^*O(n+2)$, $A^n:=End_{O_Y}(T^n)$. Note that the category of right $A^n$ modules is equivalent to $Mod-A^n$ in the introduction. As in (\ref{equ1}) we have the derived equivalence\cite{Bri05}: 
\begin{equation*}
RHom(T^n, \_): D^b(Y)\rightarrow D^b(Mod-A^n),
\end{equation*}
where the inverse map is given by $\_\otimes_{A^n}^L T^n$. 

Let $(Mod-A^n)_c$ denote the subcategory of $Mod-A^n$ consisting of modules of finite length. Since $Y$ is a noncompact CY 3-fold, we consider sheaves with compact support. Let $F$ be a coherent sheaf on $Y$ with compact support. If $RHom(T^n, F)\in Mod-A^n$ for some $n$, then $RHom(T^n, F)\in (Mod-A^n)_c$. So it is enough to consider the subcategories $(Mod-A^n)_c$. 

\bigskip

\subsection{The canonical orientation data on a quiver heart}

We first review the construction of the canonical orientation data for the stack of quiver representations. 
This section is a summary of a very small part of \cite{Dav}, and the contents in the section was explained to the author by Ben Davison during the program EGN at MSRI. The author is very grateful for his tireless explanation. The general construction can be found in \cite{Dav}, we only include here the ingredients needed for the Jacobi algebra $A^n$.    

Let $Mod_{A^n}$ be the stack of the right $A^n$ modules of finite length. Then the universal module $M(n)$ on $Mod_{A^n}$ is a $O_{Mod_{A^n}}-A^n$ bimodule, i.e. a right $O_{Mod_{A^n}}\otimes_k A^n$ module, where $O_{Mod_{A^n}}\otimes_k A^n$ is understood as a sheaf of rings on $Mod_{A^n}$. We have an isomorphism of $O_{Mod_{A^n}}$ modules (we omit $k$ in the notation). 
\begin{equation}
\label{equ4}
\begin{split}
& R\mathcal{H}om_{O_{Mod_{A^n}}\otimes A^n}(M(n),M(n))\\
&\simeq M(n)\bigotimes^L(O_{Mod_{A^n}}\otimes A^n)\bigotimes^LR\mathcal{H}om_{O_{Mod_{A^n}}\otimes A^n}(M(n), O_{Mod_{A^n}}\otimes A^n).
\end{split}
\end{equation}

\cccblue{Since $M(n)$ is flat over $Mod_{A^n}$}, from (\ref{equ4}) we know that $R\mathcal{H}om_{Mod_{A^n}\otimes A^n}(M(n), M(n))$ can be computed using a \cccblue{relative} free bimodule resolution of $O_{Mod_{A^n}}\otimes A^n$. $A^n$'s are all isomorphic to each other, we omit $n$ in the following paragraph. Since $A$ is the Jacobi algebra of a quiver algebra with potential \ccgreen{and A is Calabi-Yau \cite{Gin}}, $A$ has the following bimodule resolution \cite{Gin}:  
\begin{equation}
\label{equ5}
0\rightarrow (A\otimes_R A)^*\xrightarrow{\delta_3} A\otimes_R E^*\otimes_R A\xrightarrow{\delta_2} A\otimes_R E\otimes_R A\xrightarrow{\delta_1} A\otimes_R A\xrightarrow{\delta_0} A\rightarrow 0, 
\end{equation}
where $R\simeq e_0\mathbb{C}\oplus e_1\mathbb{C}\oplus e_2\mathbb{C}$, and $E\simeq \mathbb{C}Q_1$, where $Q_1$ is the set of arrows of $Q$. Here the $( )^*$ is the bimodule dual defined by $Hom(\_, A\otimes_R A)$. In particular, if we take the bimodule structure of $A\otimes_R A$ to be the outer bimodule structure, i.e. $a(b'\otimes b'')c=(ab')\otimes (b''c)$, then $(A\otimes_R A)^*$ is isomorphic as a bimodule to $A\otimes_R A$ with the inner bimodule structure, i.e. $a(b'\otimes b'')c=(b'c)\otimes (ab'')$, for any $b', b'', a, c\in A$.  

Each direct summand of $(A\otimes_R A)^*$ is generated by $e_i\otimes_R e_i$ as an $A\otimes_R A$ bimodule via the inner bimodule structure. 
Each direct summand of $A\otimes_R E^*\otimes_R A$ is generated by $a_i^*$, $b_j^*$, $c_k^*$ as an $A\otimes_R A$ bimodule via the inner bimodule structure. This is equivalent to generated by $e_0\otimes e_1$, $e_1\otimes e_2$, $e_2\otimes e_0$ as an $A\otimes_R A$ bimodule via the outer bimodule structure.
Each direct summand of $A\otimes_R E\otimes_R A$ is generated by $a_i$, $b_j$, $c_k$, or equivalently $e_1\otimes e_0$, $e_2\otimes e_1$, $e_0\otimes e_2$ as an $A\otimes_R A$ bimodule via the outer bimodule structure.  
Each direct summand of $A\otimes_R A$ is generated by $e_i\otimes_R e_i$ as an $A\otimes_R A$ bimodule via the outer bimodule structure. 

The map $\delta_3$ is defined by $\delta_3(e_0\otimes_R e_0)=\sum_k e_0\otimes c_k^*\otimes c_ke_2-\sum_i e_1a_i\otimes a_i^*\otimes e_0$, and similarly for $e_1\otimes_R e_1$, and $e_2\otimes_R e_2$. Or equivalently, $\delta_3(e_0\otimes_R e_0)=\sum_k c_ke_2\otimes c_k^*\otimes e_0-\sum_i e_0\otimes a_i^*\otimes e_1a_i$ via the outer bimodule structure.

$\delta_2$ is defined by $\delta_2(a_1^*)=c_3\otimes b_2\otimes e_1+e_0\otimes c_3\otimes b_2-c_2\otimes b_3\otimes e_1-e_0\otimes c_2\otimes b_3$, and similarly for $a_2, a_3, b_j, c_k$.

$\delta_1$ is defined by $\delta_1(a_i)=a_i\otimes e_0-e_1\otimes a_i$, and similarly for $b_j$, $c_k$. 

Finally $\delta_0$ is defined by multiplication on $A$. 

Note that $(O_{Mod_A}\otimes A)\otimes_{O_{Mod_A}\otimes R} (O_{Mod_A}\otimes A)$ has an $A$ bimodule structures induced by the map $A\rightarrow O_{Mod_A}\otimes A$. Apply $\_\otimes_{A^{op}\otimes_R A} (O_{Mod_A}\otimes A\otimes_{O_{Mod_A}\otimes R} O_{Mod_A}\otimes A)$ to (\ref{equ5}), we get a bimodule resolution for $O_{Mod_A}\otimes A$ (\ccgreen{to simplify notation, we abuse notation and} write $R$ for $O_{Mod_A}\otimes R$)
\begin{equation}
\label{equ40}
\begin{split}
& ((O_{Mod_A}\otimes A)\otimes_{R} (O_{Mod_A}\otimes A))^*\rightarrow (O_{Mod_A}\otimes A)\otimes_{R} E^*\otimes_{R} (O_{Mod_A}\otimes A)\rightarrow \\
& (O_{Mod_A}\otimes A)\otimes_{R} E\otimes_{R} (O_{Mod_A}\otimes A)\rightarrow (O_{Mod_A}\otimes A)\otimes_{R} (O_{Mod_A}\otimes A).
\end{split}
\end{equation}

Consider the term $R\mathcal{H}om_{O_{Mod_A}\otimes A}(M,O_{Mod_A}\otimes A)$. Recall that $Me_i$'s are vector bundles on $Mod_{A}$, denote $(Me_i)^*$'s to be the dual vector bundles. Define $M^*$ to be the left $A$ module with $(Me_0)^*\oplus (Me_1)^*\oplus (Me_2)^*$ as the underlying vector bundle, and the edges of $A$ act on it on the left via \cblue{the dual action of their action on $M$}. 

\begin{lemma}
There is an isomorphism of $A-O_{Mod_A}$ bimodules: 
\begin{equation*}
R\mathcal{H}om_{O_{Mod_{A}}\otimes A}(M, O_{Mod_{A}}\otimes A)\simeq M^*[-3].
\end{equation*} 
\end{lemma}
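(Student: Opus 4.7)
The plan is to compute $R\mathcal{H}om_{O_{Mod_A}\otimes A}(M, O_{Mod_A}\otimes A)$ explicitly using the relative bimodule resolution (\ref{equ40}). Each term of (\ref{equ40}) is, as a right $O_{Mod_A}\otimes A$-module, a direct sum of copies of $O_{Mod_A}\otimes e_kA$ and hence flat; since $M$ is flat over $O_{Mod_A}$, applying $M\otimes_{O_{Mod_A}\otimes A}(-)$ to (\ref{equ40}) produces a projective resolution of $M$ as a right $O_{Mod_A}\otimes A$-module, whose $i$-th term has the form $M\otimes_R F_i\otimes_R A$ with $F_0=R$, $F_1=E$, $F_2=E^*$, and $F_3=R$ (the latter carrying the inner bimodule twist inherited from $(A\otimes_R A)^*$).

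Next I would apply $\mathcal{H}om_{O_{Mod_A}\otimes A}(-, O_{Mod_A}\otimes A)$ term-by-term. By the tensor-Hom adjunction along $R\hookrightarrow A$, each term identifies with $\mathcal{H}om_{O_{Mod_A}\otimes R}(M\otimes_R F_i, O_{Mod_A}\otimes A)$. Since $R\simeq \mathbb{C}e_0\oplus\mathbb{C}e_1\oplus\mathbb{C}e_2$ is semisimple, these decompose into explicit finite direct sums involving the fibrewise duals $(Me_i)^*$ and the summands $Ae_j$, inheriting a natural left $A$-module structure from left multiplication on the target $A$.

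The key structural input is the Calabi-Yau self-duality of the resolution (\ref{equ5}): applying the bimodule dual $\mathcal{H}om_{A\otimes A^{op}}(-, A\otimes_R A)$ swaps $P_i$ with $P_{3-i}$ (exchanging $E$ and $E^*$, and outer with inner bimodule structures on the two copies of $R$), compatibly with the differentials $\delta_i$ after reindexing. Transferring this across the tensor with $M$, the dualized complex becomes, term by term and differential by differential, the ``reversed'' left-module analogue of (\ref{equ5}) producing a resolution of $M^*$ sitting in cohomological degrees $[0,3]$ in reverse order. Hence its cohomology is $M^*$ concentrated in degree $3$, which gives $R\mathcal{H}om_{O_{Mod_A}\otimes A}(M, O_{Mod_A}\otimes A)\simeq M^*[-3]$.

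The main obstacle is careful bookkeeping of bimodule structures through this chain of identifications. One must verify that the left $A$-module structure arising naturally on the dualized complex coincides with the structure on $M^*$ defined in the paper, namely that each arrow $a_i, b_j, c_k$ of $Q$ acts on the left of $M^*$ by the dual of its right action on $M$. This reduces to inspecting the dualized differentials on the summands $(Me_i)^*\otimes Ae_j$ and checking that they implement exactly this left action on the cohomology $M^*$, paralleling the explicit formulas for $\delta_0,\delta_1,\delta_2$ recalled earlier.
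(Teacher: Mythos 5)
Your proposal is correct and takes essentially the same route as the paper: tensor the Ginzburg bimodule resolution (\ref{equ5}) (after flat base change) with $M$ to get a projective resolution of $M$, dualize into $O_{Mod_A}\otimes A$, and recognize the result as a shifted resolution of $M^*$ via the self-duality of (\ref{equ5}). The paper implements the self-duality step by writing out both complexes and checking $\alpha_i^\vee=\pm\beta_i$ term by term (where the $\beta_i$ come from tensoring (\ref{equ5}) with $M^*$ on the other side), which is exactly the ``careful bookkeeping of bimodule structures'' you flag as the main obstacle; the paper also works over an affine cover $\operatorname{Spec}(B)$ and then invokes canonicity, rather than directly over the stack, but this is a notational rather than a conceptual difference.
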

\begin{proof}

We show the statement by constructing a canonical isomorphism on an affine cover $\amalg S_i\rightarrow Mod_{A^n}$. Let $S=Spec(B)$ be an affine scheme. Let $M$ be a family of $A$ modules flat over $B$.

Similar to (\ref{equ40}), we have a bimodule resolution for $B\otimes A$ (\ccgreen{to simplify notation, we write $B$ for $O_S$, and $R$ for $B\otimes R$}):
\begin{equation}
\label{equ6}
\begin{split}
((B&\otimes A)\otimes_{R} (B\otimes A))^*\rightarrow (B\otimes A)\otimes_{R} E^*\otimes_{R} (B\otimes A)\rightarrow (B\otimes A)\otimes_{R} E\otimes_{R} (B\otimes A)\rightarrow \\
(B&\otimes A)\otimes_{R} (B\otimes A)\rightarrow B\otimes A. 
\end{split}
\end{equation}
Apply $M\otimes_{B\otimes A}\_$ to this resolution, we have 
\begin{equation}
\label{equ7}
(B\otimes A)\otimes_{R} M\xrightarrow{\alpha_3} (B\otimes A)\otimes_{R} E^*\otimes_{R} M\xrightarrow{\alpha_2} M\otimes_{R} E\otimes_{R} (B\otimes A)\xrightarrow{\alpha_1} M\otimes_{R} (B\otimes A)\xrightarrow{\alpha_0} M. 
\end{equation}

Since (\ref{equ6}) is an exact sequence of flat $B\otimes A$ modules, we see that (\ref{equ7}) is exact for finitely generated module $M$. \cccblue{Since $M$ is flat over $S$, (\ref{equ7}) gives a projective} resolution of $M$ as a right $B\otimes A$ module. We can compute $R\mathcal{H}om_{B\otimes A}(M,B\otimes A)$ by the resolution (\ref{equ7}).  $R\mathcal{H}om_{B\otimes A}(M,B\otimes A)$ is quasi-isomorphic to a complex of left $A$ modules:
\begin{equation}
\label{equ8}
M^*\otimes_{R} (B\otimes A)\xrightarrow{\alpha_1^\vee} M^*\otimes_{R} E^*\otimes_{R} (B\otimes A)\xrightarrow{\alpha_2^\vee} (B\otimes A)\otimes_{R} E\otimes_{R}  M^*\xrightarrow{\alpha_3^\vee} (B\otimes A)\otimes_{R} M^*.
\end{equation}

Note that if we apply $\_\otimes_{B\otimes A^{op}}M^*$ to the bimodule resolution of $B\otimes A$, we obtain a bimodule resolution for $M^*$: 
\begin{equation*}
M^*\otimes_{R} (B\otimes A)\xrightarrow{\beta_1} M^*\otimes_{R} E^*\otimes_{R} (B\otimes A)\xrightarrow{\beta_2} (B\otimes A)\otimes_{R} E\otimes_{R}  M^*\xrightarrow{\beta_3} (B\otimes A)\otimes_{R} M^*.
\end{equation*}

By a direct comparison of the maps, we have that $\alpha_1^\vee=-\beta_1$, $\alpha_2^\vee=\beta_2$ and $\alpha_3^\vee=-\beta_3$. This implies that (\ref{equ8}) is a resolution for $M^*$. Hence $R\mathcal{H}om_{B\otimes A}(M, B\otimes A)\simeq M^*[-3]$. Since the isomorphism is canonical, the statement is also true for the universal family over $O_{Mod_A}\otimes A$.
\end{proof}

By Lemma 1 and equation (\ref{equ40}), $R\mathcal{H}om_{O_{Mod_A}\otimes A}(M, M)$ is quasi-isomorphic to the complex below,
\begin{equation}
\label{equ10}
M^*\otimes_{R} M\rightarrow M^*\otimes_{R} E^*\otimes_{R} M\rightarrow M\otimes_{R} E\otimes_{R} M^*\rightarrow M\otimes_{R} M^*.
\end{equation}

Let $H$ be the bifunctor from $D^b(O_{Mod_A}\otimes A)\times D^b(O_{Mod_A}\otimes A)^{op}$ to $D^b(O_{Mod_A}\otimes k)$ defined by 
\ccgreen{
\begin{equation*}
H(N_1,N_2)=N_1\otimes^L((O_{Mod_A}\otimes A)\otimes_{R} E\otimes_{R}(O_{Mod_A}\otimes A) \rightarrow (O_{Mod_A}\otimes A)\otimes_{R} (O_{Mod_A}\otimes A))\otimes^LN_2^*.
\end{equation*} 
}
Since in the sequence (\ref{equ10}) the module in degree $i$ is dual to the module in degree $3-i$, we have

\begin{equation*}
det(R\mathcal{H}om_{O_{Mod_{A}}\otimes A}(M,M))\simeq det(H(M,M))^2.
\end{equation*}

\cgreen{
Denote $det(H(M, M))$ by $L_{A}$.
}
Since $H(M,\_)$ and $H(\_,M)$ are triangulated functors between triangulated categories, $L_A$ satisfies
\cgreen{
\begin{equation}
\label{equLA}
p_2^*L_A\simeq p_1^*L_A\otimes p_3^*L_A\otimes R\mathcal{H}om_{St^{(2)}}(p_1^*M_1, p_3^*M_3).
\end{equation}
}

\ccgreen{By definition of $H$, applying $H(M, \_)$ or $H(\_, M)$ to an exact triangle gives a triangle which is an exact sequence on each row. Since $H(M, M)$ equals to half of the complex $R\mathcal{H}om_{O_{Mod_{A}}\otimes A}(M,M)$, the square of (\ref{equLA}) satisfies the condition (ii) in definition \ref{def1}. Hence $L_A$ defines an orientation data on $Mod_{A}$.  }
\bigskip

\subsection{Orientation data on $Coh(Y)$ from geometry} 

This section was explained to the author by Yukinobu Toda during the program EGN at MSRI, the author is very grateful for his patient explanation of this unpublished work. 

To simplify notation, we will write the arguments for a single sheaf. It works exactly the same way if we replace $F$ by a family of sheaves over $Spec(B)$. Since the isomorphisms are canonical, as in lemma 1, the conclusion holds for the universal family on the stack of coherent sheaves on $Y$.  

Let $F$ be a coherent sheaf on $Y$ with compact support. We have the exact sequence 
\begin{equation}
\label{equ46}
0\rightarrow \pi^*\pi_*F(3)\xrightarrow{f_1} \pi^*\pi_*F\xrightarrow{f_2} F\rightarrow 0.
\end{equation}
Denote the action of $O_{\mathbb{P}^2}(3)$ on $\pi_*F$ by $g:O_{\mathbb{P}^2}(3)\otimes \pi_*F\rightarrow \pi_*F$. Let the map $s$ be the multiplication by the canonical section of $\pi^*O(-3)$. Then $f_1$ is explicitly given by $\pi^*g-s\otimes id$. $f_2$ is the canonical adjoint map.   
Then \cblue{we have the following exact triangle:}
\begin{equation*}
RHom(F,F)\rightarrow RHom(\pi^*\pi_*F, F)\rightarrow RHom(\pi^*\pi_*F(3),F)\rightarrow RHom(F,F)[1].
\end{equation*}

This implies that
\begin{equation*}
\begin{split}
det(RHom(F,F))
& \simeq det(RHom(\pi_*F,\pi_*F))\otimes det(RHom(\pi_*F(3), \pi_*F))^*\\
& \simeq (det(RHom(\pi_*F, \pi_*F))^2,
\end{split}
\end{equation*}
\cblue{where the second isomorphism is by Serre duality on $\mathbb{P}^2$.}
This gives a canonical square root $det(RHom(\pi_*F, \pi_*F))$ for $det(RHom(F,F))$. If we denote the stack of coherent sheaves on $Y$ with compact support by $Coh_Y$, 
the above argument gives a canonical square root of $det(R\mathcal{H}om_{Coh_Y}(E, E))$, where $E$ is the universal sheaf on $Coh_Y$. Denote this line bundle by $L_g$. 
\bigskip

\section{Orientation data on $Coh_Y$ by gluing the $L_{A^n}$'s}

\ccgreen{In this section, by a family of coherent sheaves on Y with compact support we mean that the family of supports is proper over the base.}
We first recall the following property of bounded family. To simplify notation we abuse notation and denote $\pi_Y^*(\pi^*O(i))$ by $\pi^*O(i)$, and $\pi_Y^*T^n$ by $T^n$. Let $F$ be a flat family of coherent sheaves on $Y$ with compact support over a base scheme $S$, \cblue{where $S$ is of finite type. Then for n large enough, $R^i\pi_{S*}(F\otimes \pi^*O(n))=0$ for $i>0$.} By the definition of $T^{-n}$, this implies that: 
\begin{lemma}
\label{lemma2}
\cblue{$R\mathcal{H}om_S(T^{-n-2}, F)$ is a flat family of $A^{-n-2}$ module over $S$.}
\end{lemma}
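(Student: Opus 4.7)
The plan is to unpack the $R\mathcal{H}om_S$ computation using that $T^{-n-2}$ is a direct sum of three line bundles, reduce the problem to a statement about $\pi_{S*}(F\otimes\pi^*O(k))$ for $k=n,n+1,n+2$, and then deduce flatness of each of these direct images from the vanishing of higher direct images together with compact support.

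First, since $T^{-n-2}=\pi^*O(-n-2)\oplus\pi^*O(-n-1)\oplus\pi^*O(-n)$ is locally free, the local $R\mathcal{H}om$ becomes a genuine tensor product:
\[
R\mathcal{H}om_{O_{S\times Y}}(T^{-n-2},F)\;\simeq\;\bigoplus_{k=0}^{2} F\otimes\pi^*O(n+k).
\]
Applying $R\pi_{S*}$ and using the hypothesis that, for $n$ large, $R^i\pi_{S*}(F\otimes\pi^*O(n+k))=0$ for $i>0$ and $k=0,1,2$, I obtain
\[
R\mathcal{H}om_S(T^{-n-2},F)\;\simeq\;\bigoplus_{k=0}^{2}\pi_{S*}(F\otimes\pi^*O(n+k)),
\]
which is concentrated in degree $0$, hence a bona fide sheaf on $S$. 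The right $A^{-n-2}$-module structure is then automatic from $A^{-n-2}=\mathrm{End}_{O_Y}(T^{-n-2})$ acting on the left of $T^{-n-2}$ inside $\mathrm{Hom}(T^{-n-2},-)$, since everything is functorial in $T^{-n-2}$.

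The main work is flatness over $S$. Since $\pi_S$ is not proper, I would first pass to a relative compactification $\bar\pi_S:\bar Y\times S\to S$ where $\bar Y=\mathbb{P}(O_{\mathbb{P}^2}\oplus O_{\mathbb{P}^2}(3))$, and extend $F$ by zero via the open immersion (legitimate since $F$ has compact support); call the extension $\bar F$. Then $\bar F$ is $S$-flat, the higher direct images $R^i\bar\pi_{S*}(\bar F\otimes\bar\pi^*O(n+k))$ agree with $R^i\pi_{S*}(F\otimes\pi^*O(n+k))$ and vanish for $i>0$ by hypothesis. Now the standard cohomology and base change result (Grauert, or the EGA III criterion) applied to the proper map $\bar\pi_S$ gives that $\bar\pi_{S*}(\bar F\otimes\bar\pi^*O(n+k))$ is locally free on $S$, hence flat. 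Transporting back yields flatness of $\pi_{S*}(F\otimes\pi^*O(n+k))$ over $S$, and summing over $k=0,1,2$ gives the lemma.

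The step I expect to require the most care is the justification that $F$ being an $S$-flat family of sheaves ``with compact support'' really does yield a proper extension to $\bar Y\times S$ to which cohomology and base change applies; in particular, I would want to check that the vanishing hypothesis is meant fiberwise (or passes from total higher direct images to fiberwise cohomology via base change to a point), so that Grauert's criterion produces local freeness rather than only a weaker flatness statement. Everything else, including the $A^{-n-2}$-module structure and the identification of $R\mathcal{H}om_S$ with a direct sum of twisted pushforwards, is formal.
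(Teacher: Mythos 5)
The paper gives this lemma with no proof at all: it states the vanishing $R^i\pi_{S*}(F\otimes\pi^*O(n))=0$ for $i>0$ and $n\gg 0$, and then asserts the lemma with the phrase ``By the definition of $T^{-n-2}$, this implies that.'' Your write-up is a correct elaboration of exactly that implicit step: the identification
\[
R\mathcal{H}om_S(T^{-n-2},F)\simeq\bigoplus_{k=0}^{2}R\pi_{S*}\bigl(F\otimes\pi^*O(n+k)\bigr)
\]
is what ``by the definition of $T^{-n-2}$'' means, the vanishing gives concentration in degree $0$, and the module structure is the obvious functorial one.

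Two small points of care, both going in the direction of strengthening rather than correcting your argument. First, Grauert's criterion requires $S$ reduced, which the paper does not assume (it only says finite type); you should lean on the more robust version you also mention, namely the Grothendieck/EGA III finite complex: after compactifying to $\bar Y\times S\to S$ as you do, $R\bar\pi_{S*}(\bar F\otimes\bar\pi^*O(n+k))$ is represented, universally with respect to base change, by a bounded complex $K^\bullet$ of finite locally free $O_S$-modules concentrated in degrees $[0,N]$. Second, your worry at the end about whether the vanishing hypothesis is ``meant fiberwise'' is not actually needed: the hypothesis as stated is $H^i(K^\bullet)=R^i\pi_{S*}(\cdot)=0$ for $i>0$, and a standard descending dévissage on the short exact sequences $0\to Z^i\to K^i\to B^{i+1}\to 0$ (starting from $B^{N}=K^{N}$ locally free) shows $H^0(K^\bullet)=\ker(K^0\to K^1)$ is locally free, hence $S$-flat, without ever invoking fiberwise cohomology or constancy of Euler characteristics. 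With that substitution for Grauert, your proof is complete and matches the route the paper has in mind.
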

In particular we can cover the stack of coherent sheaves by the stacks of right $A^n$ modules, for $n\in\mathbb{Z}$. 

\cgreen{We first prove some technical lemmas which will be used in the main theorem.}

Let $S=Spec(B)$ be an affine scheme of finite type. Omit $n$ in the notation for simplicity, we have the following relation between the functors $R\mathcal{H}om_S(T, \_)$ and $\_\otimes^L_{B\otimes A} T$: 
\begin{lemma}
\label{lem3}
Let $M$ be a $B\otimes A$ module which is flat over $S$, then $R\mathcal{H}om_S(T, \_\otimes^L_{B\otimes A}T)\simeq id$ canonically when applied to $M$. Let $F$ be a coherent sheaf with compact support on $S\times Y$ which is flat over $S$, then $R\mathcal{H}om_S(T,\_)\otimes^L_{B\otimes A}T\simeq id$ canonically when applied to $F$.
\end{lemma}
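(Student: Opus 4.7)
The two statements assert that the unit and counit of the candidate adjunction $(\_\otimes^L_{B\otimes A}T)\dashv R\mathcal{H}om_S(T,\_)$ are quasi-isomorphisms on the indicated classes of objects. The plan is to construct those canonical maps explicitly and then check the isomorphism property fibrewise, reducing to Bridgeland's derived equivalence on $Y$.

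Since $\pi_Y^*T$ is a $(B\otimes A)$-$\mathcal{O}_{S\times Y}$-bimodule (locally free on the $\mathcal{O}_{S\times Y}$ side), the tensor-hom adjunction produces canonical natural morphisms
\begin{align*}
\eta_M&\colon M\longrightarrow R\mathcal{H}om_S\bigl(T,\,M\otimes^L_{B\otimes A}T\bigr),\\
\varepsilon_F&\colon R\mathcal{H}om_S(T,F)\otimes^L_{B\otimes A}T\longrightarrow F,
\end{align*}
given on elements by $\eta_M(m)=(t\mapsto m\otimes t)$ and $\varepsilon_F(\phi\otimes t)=\phi(t)$. For any geometric point $s\colon\mathrm{Spec}\,k\to S$, the flatness hypotheses on $M$ and $F$ together with the local freeness of $\pi_Y^*T$ let $Ls^*$ commute past both the derived tensor product and $R\mathcal{H}om$; combined with flat base change along $\pi_S$ and the projection formula, one obtains
$$Ls^*\bigl(M\otimes^L_{B\otimes A}\pi_Y^*T\bigr)\simeq M_s\otimes^L_A T,\qquad Ls^*R\mathcal{H}om_S(T,G)\simeq R\mathrm{Hom}_Y(T,Ls^*G)$$
for any compactly supported $G$. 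By naturality, $Ls^*\eta_M$ and $Ls^*\varepsilon_F$ are identified with the unit and counit of the absolute Bridgeland equivalence $R\mathrm{Hom}_Y(T,-)\colon D^b(Y)\rightleftarrows D^b(\mathrm{Mod}\text{-}A)\colon -\otimes^L_A T$ applied to $M_s$ and $F_s$, hence are isomorphisms by \cite{Bri05}.

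To pass from fibrewise to global quasi-isomorphisms, the compactness of support and flatness hypotheses ensure that both sides of $\eta_M$ and $\varepsilon_F$ are bounded complexes with coherent cohomology; a morphism between such complexes whose cone has zero geometric fibres is then a quasi-isomorphism. The principal technical hurdle is verifying that the base-change isomorphisms above actually intertwine $Ls^*\eta_M$ and $Ls^*\varepsilon_F$ with the absolute unit and counit, which is a diagram chase using the explicit formulas for $\eta_M$ and $\varepsilon_F$ and the fact that these formulas are preserved verbatim under pullback.
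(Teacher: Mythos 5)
Your argument is correct in outline, but it is a genuinely different route from the paper's. You construct the unit $\eta_M$ and counit $\varepsilon_F$ of the adjunction $(\_\otimes^L_{B\otimes A}T)\dashv R\mathcal{H}om_S(T,\_)$, restrict to geometric fibres of $S$ via Tor-independent base change (which is legitimate here: $\pi_S$ is flat, $M$ and $F$ are $S$-flat, $T$ is locally free, and the supports are proper over $S$), identify the restricted maps with the unit and counit of the absolute Bridgeland equivalence, and conclude by the standard fact that an object of $D^-_{coh}$ with vanishing derived fibres over every point is zero. The paper instead argues by d\'evissage to generators: for the first claim it uses the explicit projective resolution of $M$ coming from Ginzburg's bimodule resolution (\ref{equ7}) and checks the natural transformation on free $B\otimes A$-modules and on modules $P\otimes A$ with $P$ a summand of a free $B$-module; for the second it pushes $F$ down to $\mathbb{P}^2$, invokes Beilinson's resolution of the diagonal to generate $\pi_*F$ by objects $\pi_{\mathbb{P}^2}^{1*}O(i)\otimes\pi_S^{1*}\widetilde{Q}$ with $Q$ projective over $B$, and lifts back to $Y$ via the sequence (\ref{equ46}). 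Your approach is shorter and outsources the real content to the absolute equivalence of \cite{Bri05}, at the cost of the cohomology-and-base-change bookkeeping and the deferred ``diagram chase'' identifying $Ls^*\eta_M$, $Ls^*\varepsilon_F$ with the absolute unit and counit --- that identification is routine for Fourier--Mukai-type kernels but is the one step you should actually write out, together with the observation that both sides of $\varepsilon_F$ lie in $D^-_{coh}$ so that the Nakayama argument on the cone applies. The paper's generator-based proof is heavier but has a side benefit: the explicit family-level resolutions it produces are exactly what is reused in the determinant computations of Theorems \ref{theorem3} and \ref{theorem4}, whereas your proof would leave those computations to be set up separately.
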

\begin{proof}
\cccblue{Consider the first claim. Since $M$ is flat over $S$, by resolution (\ref{equ7}), $M\otimes^L_{B\otimes A}T$ is bounded below. Hence the functor $R\mathcal{H}om_S(T, \_\otimes^L_{B\otimes A}T)$ is defined. The functors $R\mathcal{H}om_S(T, \_\otimes^L_{B\otimes A}T)$ and $id$ agree canonically when applied to free $B\otimes A$ modules and modules of the form $P\otimes A$, where $P$ is a direct summand of free copies of $B$. As a result, these two functors agree for any finitely generated $B\otimes A$ module $M$ which is flat over $S$. }   

\cred{On the other hand, let $F$ be a family of coherent sheaves on $Y$ with compact support which is flat over $S$. Then $\pi_*F$ is a flat family of coherent sheaves on $\mathbb{P}^2$. The argument then follows from the well known result that $D^b(\mathbb{P}^2)$ is generated by $O$, $O(1)$ and $O(2)$. Denote $\pi_1: S\times \mathbb{P}^2\times \mathbb{P}^2\rightarrow S\times \mathbb{P}^2$, and $\pi_2:S\times \mathbb{P}^2\times \mathbb{P}^2\rightarrow S\times \mathbb{P}^2$ to be the projection to the first and second factor of $\mathbb{P}^2\times \mathbb{P}^2$. Denote $\pi_{\mathbb{P}^2}^i:S\times\mathbb{P}^2\rightarrow \mathbb{P}^2$ to be the corresponding projections to $\mathbb{P}^2$, and $\pi_S^i:S\times\mathbb{P}^2\rightarrow S$ be the corresponding projection to $S$. Recall Beilinson's resolution of the diagonal:
\cgreen{
\begin{equation*}
0\to... \to\pi_1^*(\pi_{\mathbb{P}^2}^{1*}\Omega^i(i))\otimes\pi_2^*(\pi_{\mathbb{P}^2}^{2*} O(-i))\to...\to O_{S\times\mathbb{P}^2\times\mathbb{P}^2}\to O_{S\times\Delta}.
\end{equation*}
}
We see that $\pi_*F$ is generated by 
\begin{equation*}
\Phi^i:=R\pi_{1*}(\pi_2^*(\pi_*F)\otimes^L(\pi_1^*(\pi_{\mathbb{P}^2}^{1*}\Omega^i(i))\otimes_{O_{S\times\mathbb{P}^2\times \mathbb{P}^2}} \pi_2^*(\pi_{\mathbb{P}^2}^{2*} O(-i)))
\end{equation*} 
for $i=0, 1, 2$. By the projection formula, $\Phi^i\simeq (\pi_{\mathbb{P}^2}^{1*}\Omega^i(i))\otimes_{O_{S\times \mathbb{P}^2}} R\pi_{1*}(\pi_2^*(\pi_*F(-i)))$. Since $\pi_*F(-i)|_s$ has no higher cohomology for any point $s\in S$, we have 
\begin{equation*}
\begin{split}
\Phi^i&\simeq (\pi_{\mathbb{P}^2}^{1*}\Omega^i(i))\otimes_{O_{S\times\mathbb{P}^2}}\pi_{1*}(\pi_2^*\pi_*F(-i))\\
&\simeq (\pi_{\mathbb{P}^2}^{1*}\Omega^i(i))\otimes_{O_{S\times\mathbb{P}^2}}\pi_S^{1*} \pi^2_{S*}(\pi_*F(-i)).
\end{split}
\end{equation*}
Since $\pi_*F$ is flat over $S$, $\Gamma(\pi^2_{S*}(\pi_*F(-i))$ is a projective $B$ module. Since $\Omega^i(i)$ is quasi-isomorphic to a finite complex of $O, O(1), O(2)$, we have $\Phi^i$, and hence $\pi_*F$ is generated by objects $\pi_{\mathbb{P}^2}^{1*}O(i)\otimes_{O_{S\times\mathbb{P}^2}} \pi_S^{1*}\widetilde{Q}$, for $Q$ a projective $B$ module \cgreen{and $\widetilde{Q}$ the sheaf associated to the module $Q$}. By the exact sequence (\ref{equ46}), we have $F$ is generated by objects $\pi_Y^{1*}(\pi^*O(i))\otimes_{O_{S\times Y}} \pi_S^{1*}\widetilde{Q}$. Since $Q$ is a direct summand of free copies of $B$, $R\mathcal{H}om_S(T,\_)\otimes^L_{B\otimes A}T\simeq id$ canonically when applied to $\pi_Y^{1*}(\pi^*O(i))\otimes_{O_{S\times Y}} \pi_S^{1*}\widetilde{Q}$, hence the conclusion follows. 
}

\end{proof}

\ccgreen{
Let $Coh^n:=Coh_Y\cap Mod_{A^{-n}}$, $Coh_Y=\cup Coh^n$.} \cgreen{Recall that $E$ is the universal sheaf on $Coh_Y$ defined at the end of section 2.2}. On $Coh^n$, we have 
\begin{lemma}
\label{lem4}
\begin{equation}
\label{equlem4}
det(R\mathcal{H}om_{Mod_{A^{-n}}}(M(-n), M(-n)))|_{Coh^n}\simeq det(R\mathcal{H}om_{Coh_Y}(E,E))|_{Coh^n}.
\end{equation}
\end{lemma}
\begin{proof}
It is enough to show there is such a canonical isomorphism over an affine scheme $S=Spec(B)$. Let $F$ be a coherent sheaf on $S\times Y$ which is flat over $S$, such that $R\mathcal{H}om_S(T^{-n},F)$ is a flat family of $A^{-n}$ modules over $S$. 

By lemma \ref{lem3} we have
\begin{equation*}
F\simeq R\mathcal{H}om_S(T^{-n}, F)\otimes^L_{O_S\otimes A^{-n}}T^{-n}.
\end{equation*} 
Since $R\mathcal{H}om_S(T^{-n}, F)$ is a finitely generated $B\otimes A^{-n}$ module flat over $B$, it has a projective resolution given by (\ref{equ7}), denote it by $N^\bullet$. This implies that $F$ has a resolution consisting of copies of $V_i\otimes_{R}T^{-n}$, $V_i\otimes_{R}E\otimes_{R}T^{-n}$ or their duals \ccgreen{for $V_i$ some $R$-module}, denote it by $L^\bullet$. 
Then we have the following isomorphisms:
\begin{equation}
\label{equF}
R\mathcal{H}om_S(F, F)\simeq R\mathcal{H}om_S(L^\bullet, F),
\end{equation} 
\begin{equation}
\label{equT}
RHom_{O_S\otimes A^{-n}}(R\mathcal{H}om_S(T^{-n}, F), R\mathcal{H}om_S(T^{-n}, F))\simeq RHom_{O_S\otimes A^{-n}}(N^\bullet, R\mathcal{H}om_S(T^{-n}, F)).
\end{equation}
Since $R\mathcal{H}om_S(L^i, F)$ has no higher cohomology, we see the complexes of the RHS of (\ref{equF}) and (\ref{equT}) are canonically isomorphic. 
Hence we have a canonical isomorphism: 
\begin{equation*}
R\mathcal{H}om_S(F, F)\simeq RHom_{O_S\otimes A^{-n}}(R\mathcal{H}om_S(T^{-n}, F), R\mathcal{H}om_S(T^{-n}, F)).
\end{equation*}

Hence the statement of the lemma follows. 

\end{proof}

\begin{theorem}
\label{theorem3}
$L_{A^n}$'s are compatible and glue to be a line bundle on $\cup Mod_{A^n}$. In particular, this gives an orientation data on $Coh_Y$.
\end{theorem}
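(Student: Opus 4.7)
The plan is to produce, for each $n$, a canonical isomorphism of line bundles $L_{A^n}$ and $L_{A^{n+1}}$ when both are transported to the same open substack of $Coh_Y$, and to verify the cocycle condition on triple overlaps. Writing $U^n \subset Coh_Y$ for the open substack of sheaves $F$ with $R\mathcal{H}om(T^n, F) \in (\text{Mod-}A^n)_c$, the task is to compare the pullbacks of $L_{A^n}$ and $L_{A^{n+1}}$ on $U^n \cap U^{n+1}$. A useful preliminary reduction: both $L_{A^n}^{\otimes 2}$ and $L_{A^{n+1}}^{\otimes 2}$ are tautologically identified with $\det R\mathcal{H}om_{Coh_Y}(\mathcal{E},\mathcal{E})$, since $R\mathcal{H}om_{A^n}(M_n, M_n) \simeq R\mathcal{H}om_Y(F,F)$ under the derived equivalence. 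Hence the target is a canonical square root of this tautological identification of squares.

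The main computational step is to unpack the two-term complex $H(M_n, M_n)$ in (\ref{equ10}) for $M_n := R\mathcal{H}om_S(T^n, F)$. Under the assumption $F \in U^n$, Lemma \ref{lemma2} gives $M_n e_i \simeq \pi_{S*}(F \otimes \pi^*O(-n-i))$ in degree zero, and the arrow-differentials in $H(M_n, M_n)$ become multiplication by the three distinguished sections of $\pi^* O_{\mathbb{P}^2}(1)$. The crucial observation is that two of the three summands of $M_n$ agree with summands of $M_{n+1}$, namely $M_n e_{i+1} = M_{n+1} e_i$ for $i = 0, 1$; thus $H(M_n, M_n)$ and $H(M_{n+1}, M_{n+1})$ share many common tensor factors, and the ``difference'' is supported on the pieces involving only the extremal summands $M_n e_0 = \pi_{S*}(F(-n))$ and $M_{n+1} e_2 = \pi_{S*}(F(-n-3))$. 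To relate these extremal summands I would use the pullback to $Y$ of the Koszul resolution
\begin{equation*}
0 \to O_{\mathbb{P}^2}(-3) \to O_{\mathbb{P}^2}(-2)^{\oplus 3} \to O_{\mathbb{P}^2}(-1)^{\oplus 3} \to O_{\mathbb{P}^2} \to 0,
\end{equation*}
tensored with $F$; taking $\pi_{S*}$ yields a four-term exact sequence among the $\pi_{S*}(F(-n-i))$ which provides the desired canonical virtual cancellation. Combined with Serre duality on $\mathbb{P}^2$ to handle the dual contributions entering through $M_n^*$, this produces a canonical isomorphism of the two-term complexes $H(M_n, M_n)$ and $H(M_{n+1}, M_{n+1})$ after cancellation of common factors, and hence the desired isomorphism $L_{A^n} \simeq L_{A^{n+1}}$.

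I expect the principal obstacle to be establishing genuine \emph{canonicity}: the constructed isomorphism must square to the tautological isomorphism of $L^{\otimes 2}$'s, not merely agree with it up to a sign. This requires careful bookkeeping of signs and orderings in the bimodule resolution (\ref{equ5}) against the signs in the Koszul differential, and checking that the arrow actions — which enter both as $E$ on the left and as $E^*$ on the right of (\ref{equ10}) — are matched compatibly under the index shift. Once pairwise canonicity is in hand, the cocycle condition on triple overlaps $U^n \cap U^{n+1} \cap U^{n+2}$ follows formally, since canonical square roots of a fixed line bundle compose canonically. Finally, condition (ii) of Definition \ref{def1} for the glued bundle on $Coh_Y$ is inherited patch-by-patch from each $L_{A^n}$, because the canonical gluing isomorphisms commute with the three pullbacks $p_1, p_2, p_3$ from $M^{(2)}$.
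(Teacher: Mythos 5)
Your proposal follows essentially the same route as the paper: identify the common summands $M_n e_{i+1} = M_{n+1} e_i$ (the paper's $V_1 \simeq W_0$, $V_2 \simeq W_1$), use the Koszul resolution $0 \to O(-3) \to O(-2)^3 \to O(-1)^3 \to O \to 0$ pulled back and tensored with $F$ to produce the four-term exact sequence relating the extremal summands, and then compare determinants of the two-term complex $H(M,M)$ term by term. The paper's actual argument is exactly this, carried out explicitly: it writes $\det(H(M_0,M_0))$ and $\det(H(M_1,M_1))$ as monomials in $\det(W_i)^{\pm d_j}$ and shows they coincide after substituting $\det(V_0) \simeq \det(W_0)^3 \otimes \det(W_1)^{-3} \otimes \det(W_2)$ and $d_0^V = 3d_0 - 3d_1 + d_2$ coming from the Koszul sequence.

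Two caveats. First, you do not need Serre duality on $\mathbb{P}^2$ at this step; the paper's cancellation is a direct bookkeeping of determinant monomials using only the exact sequence, and Serre duality only appears in the geometric construction of $L_g$ (Theorem \ref{theorem4}). Second, and more substantively, you treat the identification of $L_{A^n}^{\otimes 2}$ with $\det R\mathcal{H}om_{Coh_Y}(\mathcal{E},\mathcal{E})$ as tautological under the derived equivalence, but this is actually where a non-trivial portion of the paper's proof lives: one must show that $R\mathcal{H}om_S(F,F)$ computed via a resolution of $F$ by copies of $T^{-n}$ agrees \emph{canonically} with $R\mathrm{Hom}_{B\otimes A^{-n}}(N^\bullet, R\mathcal{H}om_S(T^{-n},F))$ computed from a free $B\otimes A^{-n}$-resolution $N^\bullet$, and that the resulting isomorphism is independent of choices up to quasi-isomorphism. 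The paper also needs a second gluing (of the $\det R\mathcal{H}om_{Mod_{A^{-n}}}$'s themselves, producing $L_M$) before comparing with $\det R\mathcal{H}om_{Coh_Y}(\mathcal{E},\mathcal{E})$; your sketch elides this. Your concern about signs and genuine canonicity of the square root is a fair one, and the paper addresses it only by asserting that the Koszul sequence and the resulting determinant comparison are canonical, not by an explicit sign check.
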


\begin{proof}
Consider the hearts $Mod-A^0$ and $Mod-A^1$. 

Let $S=Spec(B)$ be an affine scheme of finite type. 
Let $M$ be a flat family of right $A^0$ modules over $S$ of finite length. Then $F:=M\otimes_{B\otimes A^0}\pi^*T^0$ is a family of objects in $D^b(Y)$ over $S$. $M$ can also be viewed as a family of objects in $D^b(Mod-A^1)$,  assuming $M$ is also a family of $A^1$ modules. Note that $A^0$ and $A^1$ has $C=End(\pi^*(O(1)\oplus O(2)))$ as a subalgebra, and the restrictions of the two module structures of $M$ to $C$ agree. 

We denote $M$ by $M_0$ as a $B\otimes A^0$ module, and $V_0\oplus V_1\oplus V_2$ as a $B\otimes R^0$ module. Similarly we denote $M$ by $M_1$ as a $B\otimes A^1$ module, and $W_0\oplus W_1\oplus W_2$ as a $B\otimes R^1$ module. 
We have that
\begin{equation*}
V_0\simeq R\mathcal{H}om_S(O, F),
\end{equation*}
\begin{equation}
\label{equV1}
V_1\simeq R\mathcal{H}om_S(\pi^*O(1), F)\simeq W_0,
\end{equation}
\begin{equation}
\label{equV2}
V_2\simeq R\mathcal{H}om_S(\pi^*O(2), F)\simeq W_1,
\end{equation}
\begin{equation*}
W_2\simeq R\mathcal{H}om_S(\pi^*O(3), F).
\end{equation*}

We have the exact sequence 
\begin{equation}
\label{equ20}
0\rightarrow \pi^*O(-3)\rightarrow \pi^*O(-2)^3\rightarrow \pi^*O(-1)^3\rightarrow O\rightarrow 0,
\end{equation} 
which gives two exact triangles 
\begin{equation}
\label{equ21}
\pi^*O(-3)\rightarrow \pi^*O(-2)^3\rightarrow Q\rightarrow\pi^*O(-3)[1]
\end{equation}
and 
\begin{equation}
\label{equ22}
Q\rightarrow \pi^*O(-1)^3\rightarrow O\rightarrow Q[1].
\end{equation} 

\ccgreen{Note that $Q\simeq \pi^*\Omega_{\mathbb{P}^2}$, hence is locally free.} Apply $\_\otimes^LF$ to (\ref{equ21}) and (\ref{equ22}), since every term in the above triangles are locally free, we get exact triangles
\begin{equation*}
F(-3)\rightarrow F(-2)^3\rightarrow Q\otimes F\rightarrow F(-3)[1],
\end{equation*} 

\begin{equation*}
Q\otimes F\rightarrow F(-1)^3\rightarrow F\rightarrow Q\otimes F[1].
\end{equation*}
Apply $R\mathcal{H}om_S(O,\_)$ to the above two exact triangles. Since $R\mathcal{H}om_S(\pi^*T^0,F)$ is a family of $A^0$ modules, and $R\mathcal{H}om_S(\pi^*T^1,F)$ is a family of $A^1$ modules, we have $R^i\mathcal{H}om_S(O, F(-k))=0$ for all $i\neq 0$, $k=0,1,2,3$. By the long exact sequences, the only non zero cohomology of $Q\otimes F$ is $R^0\mathcal{H}om_S(O, Q\otimes F)$. Thus we get an exact sequence of vector bundles over $S$
\begin{equation*}
0\rightarrow R\mathcal{H}om_S(O, F(-3))\rightarrow R\mathcal{H}om_S(O, F(-2))^3\rightarrow R\mathcal{H}om_S(O, F(-1))^3\rightarrow R\mathcal{H}om_S(O, F)\rightarrow 0.
\end{equation*}
This is equivalent to 
\begin{equation}
\label{equW}
0\rightarrow W_2\rightarrow W_1^3\rightarrow W_0^3\rightarrow V_0\rightarrow 0.
\end{equation}

Now we compute
\begin{equation*}
det(H(M,M))\simeq det(M\otimes_{R_B}E\otimes_{R_B}M^*)^*\otimes det(M\otimes_{R_B} M^*)
\end{equation*}
both as $A^0$ module, and as $A^1$ module. We have 
\begin{equation}
\label{equM1}
\begin{split}
det(H(M_1,M_1))\simeq 
& (det(W_1)^{d_0}\otimes det(W_0^*)^{d_1})^3 \\
& \otimes (det(W_2)^{d_1}\otimes det(W_1^*)^{d_2})^3 \\
& \otimes (det(W_0)^{d_2}\otimes det(W_2^*)^{d_0})^3.
\end{split}
\end{equation}

While as $A^0$ module, we have the isomorphism \cgreen{induced by (\ref{equV1}), (\ref{equV2}) and (\ref{equW})}:
\begin{equation}
\label{equM0}
\begin{split}
det(H(M_0,M_0))\simeq 
& (det(W_0)^{3d_0-3d_1+d_2}\otimes ((det(W_0)^3\otimes det(W_1^*)^3\otimes det(W_2))^*)^{d_0})^3\\
& \otimes (det(W_1)^{d_0}\otimes det(W_0^*)^{d_1})^3\\
& \otimes ((det(W_0)^3\otimes det(W_1^*)^3\otimes det(W_2))^{d_1}\otimes (det(W_1^*)^{3d_0-3d_1+d_2})^3\\
\simeq & (det(W_0)^{-3d_1+d_2}\otimes det(W_1)^{3d_0}\otimes det(W_2^*)^{d_0})^3\\
& \otimes (det(W_1)^{d_0}\otimes det(W_0^*)^{d_1})^3\\
& \otimes (det(W_0)^{3d_1}\otimes det(W_2)^{d_1}\otimes (det(W_1^*)^{3d_0+d_2})^3\\
\simeq & (det(W_0)^{d_2}\otimes det(W_2^*)^{d_0})^3 \otimes (det(W_0)^{-3d_1}\otimes det(W_1)^{3d_0})^3\\
& \otimes (det(W_1)^{d_0}\otimes det(W_0^*)^{d_1})^3\\
& \otimes (det(W_2)^{d_1}\otimes det(W_1^*)^{d_2})^3 \otimes(det(W_0)^{3d_1}\otimes det(W_1)^{-3d_0})^3.
\end{split}
\end{equation}

Hence we have a canonical isomorphism \cgreen{induced by (\ref{equM1}) and (\ref{equM0})}:
\begin{equation*}
det(H(M_1,M_1))\simeq det(H(M_0,M_0)).
\end{equation*}

In general, assume that $M$ is a family of right $A^0$ module, which is also a family of right $A^m$ module. Since $M$ is a family of right $A^m$ module, let $F:=M\otimes_{B\otimes A^m}\pi^*T^m$. As in (\ref{equ21}) and (\ref{equ22}), we have exact triangles
\begin{equation}
\label{equ31}
F(-m-2)\rightarrow F(-m-1)^3\rightarrow Q\otimes F\rightarrow F(-m-2)[1]
\end{equation}
and 
\begin{equation}
\label{equ32}
Q\otimes F\rightarrow F(-m)^3\rightarrow F(-m+1)\rightarrow Q\otimes F[1].
\end{equation}
By (\ref{equ31}), we know that $R^iHom(O, Q\otimes F)=0$ for $i>0$, so by (\ref{equ32}) $R^iHom(O, F(-m+1))=0$ for $i>0$. This implies that $R^iHom(T^k, F)=0$ for all $k<m$, $i>0$. 
Similarly since $M$ is a family of right $A^0$ modules, by (\ref{equ22}) we know that $R^iHom(O, Q\otimes F)=0$ for $i<0$, so by (\ref{equ21}) we know that $R^iHom(O, F(-3))=0$ for $i<0$. This implies that $R^iHom(T^k, F)=0$ for all $k>0$, $i<0$. Combining the above observation, $M$ is also a family of right $A^k$ module, for $0<k<m$. We have canonical isomorphisms
\begin{equation*}
det(H(M_m,M_m))\simeq det(H(M_{m-1},M_{m-1}))\simeq ...\simeq det(H(M_1,M_1))\simeq det(H(M_0,M_0)).
\end{equation*}

The above isomorphisms are \cgreen{induced by repeatedly applying (\ref{equ20})}, hence the $L_{A^n}$'s glue to be a line bundle $L_a$ on $\cup Mod_{A^n}$. 

By repeatedly applying relation (\ref{equ20}) and the similar argument for gluing $L_{A^n}$'s, we have $det(R\mathcal{H}om_{Mod_{A^{-n}}}(M(-n), M(-n)))$'s glue to be a line bundle $L_M$. By (\ref{equlem4}), we have $L_M|_{Coh_Y}\simeq det(R\mathcal{H}om_{Coh_Y}(E,E))$. Hence we have $(L_a|_{Coh_Y})^2\simeq L_M|_{Coh_Y}\simeq det(R\mathcal{H}om_{Coh_Y}(E, E))$.

Leting $p_{13}:M^{(2)}\rightarrow Coh_Y\times Coh_Y$, then \ccgreen{$M^{(2)}=\cup(p_{13}^{-1}(Coh^n\times Coh^n))$}. \cgreen{As discussed at the end of Section 2.1,} $L_a|_{Coh^n}$ satisfies the compatibility condition (ii) in Definition \ref{def1} on $p_{13}^{-1}(Coh^n\times Coh^n)$. Furthermore, the isomorphism is canonical for all $n$, so $L_a$ defines an orientation data on $Coh_Y$. 

\end{proof}

\begin{theorem}
\label{theorem4}
$L_g\simeq L_a|_{Coh_Y}$
\end{theorem}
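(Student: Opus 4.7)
The plan is to produce a canonical isomorphism $L_g \simeq L_a|_{Coh_Y}$ by reducing both line bundles, on each stratum $Coh^n := Coh_Y \cap Mod\text{-}A^{-n}$, to determinants of explicit complexes built from the Beilinson module $M := R\mathcal{H}om_S(T^{-n}, F)$, and then identifying these expressions using the superpotential. Fix $n$ large and a family $F$ on $S\times Y$ in $Coh^n$. By adjunction and the projection formula,
\begin{equation*}
M \simeq R\mathcal{H}om_{S\times\mathbb{P}^2}\bigl(T_{\mathbb{P}^2}^{-n},\,\pi_{Y*}F\bigr),\qquad T_{\mathbb{P}^2}^{-n} := O(-n)\oplus O(-n+1)\oplus O(-n+2),
\end{equation*}
so $M$ is the Beilinson module of $G := \pi_{Y*}F$ and carries simultaneously an $A^{-n}$-structure (from the Bridgeland equivalence on $Y$) and a $B^{-n}$-structure (from Beilinson's equivalence on $\mathbb{P}^2$), where $B^{-n} := End_{\mathbb{P}^2}(T_{\mathbb{P}^2}^{-n})$ sits inside $A^{-n}$ as the subalgebra generated by the idempotents $e_i$ and the Beilinson arrows $a_i,b_j$. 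Under Beilinson's equivalence, $R\mathcal{H}om_{\mathbb{P}^2}(G,G) \simeq R\mathcal{H}om_{B^{-n}}(M,M)$ canonically, hence $L_g \simeq \det R\mathcal{H}om_{B^{-n}}(M,M)$.

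Next I would use the length-$2$ bimodule resolution of $B^{-n}$,
\begin{equation*}
0 \to B^{-n}\otimes_R Rel\otimes_R B^{-n} \to B^{-n}\otimes_R E_B\otimes_R B^{-n} \to B^{-n}\otimes_R B^{-n} \to B^{-n} \to 0,
\end{equation*}
where $E_B := \langle a_i, b_j\rangle$ and $Rel$ is the $3$-dimensional Koszul-relation bimodule in vertex-position $(2,0)$, to rewrite $L_g$ as an alternating product of line bundles $\det(M^*\otimes_R X\otimes_R M)^{\pm 1}$ for $X\in\{R,E_B,Rel\}$. On the quiver-heart side, by definition $L_a|_{Coh^n} = \det H(M,M) = \det(M\otimes_R E\otimes_R M^*)\otimes\det(M\otimes_R M^*)^{-1}$, with $E = E_B\oplus E_c$ and $E_c := \langle c_k\rangle$ in vertex-position $(0,2)$.

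The crucial identification comes from the superpotential $W = c_3b_2a_1-c_2b_3a_1+\cdots$: the Jacobi derivatives $\partial_{c_k}W \in e_2 A^{-n} e_0$ are exactly the Koszul relations of $B^{-n}$, so $c_k\mapsto \partial_{c_k}W$ defines a canonical $R$-bimodule isomorphism $E_c\xrightarrow{\sim}Rel^\vee$. Combined with the commutativity $V\otimes W\cong W\otimes V$ and the trivialization $\det(V_i\otimes V_i^*)\cong\mathcal{O}$, a term-by-term matching of the two determinant expressions produces a canonical isomorphism $L_a|_{Coh^n}\simeq L_g$. Functoriality in $F$ extends this to the universal sheaf, and compatibility with the gluing of Theorem \ref{theorem3} under $\_\otimes\pi^*O(1)$ — which on the geometric side corresponds to $\pi_*(F\otimes\pi^*O(1)) = \pi_*F\otimes O(1)$ — promotes it to a global isomorphism $L_a|_{Coh_Y}\simeq L_g$.

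The main obstacle is verifying that this identification is canonical at the level of line bundles and not merely up to a $2$-torsion ambiguity. Since both $L_a^{\otimes 2}$ and $L_g^{\otimes 2}$ canonically equal $\det R\mathcal{H}om(F,F)$, any abstract isomorphism $L_a\simeq L_g$ a priori differs from the canonical one by a sign, so ruling this out amounts to checking that the superpotential-induced pairing $E_c\otimes Rel\to\mathbb{C}$ matches the Serre-duality pairing on $\mathbb{P}^2$ under the adjunction $\pi_Y^{*}\dashv R\pi_{Y*}$. Testing on the line-bundle generators $\pi^*O(-n+i)$ and invoking the projection formula $R\pi_{Y*}\pi_Y^*O(-n+i)\simeq O(-n+i)\otimes\pi_*O_Y$ should reduce this to a finite, mechanical sign-tracking computation.
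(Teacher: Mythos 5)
Your proposal is correct and follows essentially the same route as the paper's proof: both reduce $L_g$ to $\det R\mathcal{H}om_{A^0_{\mathbb{P}^2}}(\pi_*M,\pi_*M)$ (your $B^{-n}$ is the paper's $A^{-n}_{\mathbb{P}^2}$), observe that the Beilinson bimodule resolution sits inside Ginzburg's resolution of $A$ with the relation bimodule being the $c_k^*$-term, and then match determinants term by term using the key isomorphism $\det(\oplus_k V_0^*\otimes c_k^*\otimes V_2)\simeq\det(\oplus_k V_0\otimes c_k\otimes V_2^*)^*$, which is exactly your superpotential-induced identification $E_c\cong Rel^\vee$. The sign/2-torsion ambiguity you flag at the end is dispatched in the paper not by a separate sign-tracking computation but by constructing the identification canonically at the level of the explicit complexes (rather than merely as an abstract isomorphism of determinants squaring to the same thing), which is also the content of your ``canonical'' assertion once made precise.
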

\begin{proof}
Since $Coh_Y= \cup Coh^n$, it is enough to show there is a canonical isomorphism $L_g|_{Coh^n}\simeq L_a|_{Coh^n}$. WLOG assume that $n=0$. Let $T^0_{\mathbb{P}^2}:=O_{\mathbb{P}^2}\oplus O_{\mathbb{P}^2}(1)\oplus O_{\mathbb{P}^2}(2)$, and $A^0_{\mathbb{P}^2}$ be $End_{\mathbb{P}^2}(T_{\mathbb{P}^2}^0)$. 
We write (\ref{equ5}) \cite{Gin} in the following explicit form:
\begin{equation*}
0\rightarrow (A\otimes_R A)^*\xrightarrow{\delta_3} 
\begin{array}{c} \oplus_i A\otimes a_i^* \otimes A\\ 
\oplus_j A\otimes b_j^* \otimes A \\ 
\oplus_k A\otimes c_k^* \otimes A 
\end{array}
\xrightarrow{\delta_2} 
\begin{array}{c} \oplus_i A\otimes a_i \otimes A\\ 
\oplus_j A\otimes b_j \otimes A \\ 
\oplus_k A\otimes c_k \otimes A 
\end{array}
\xrightarrow{\delta_1} A\otimes_R A\xrightarrow{\delta_0} A\rightarrow 0. 
\end{equation*}

The resolution for $A^0_{\mathbb{P}^2}$ can be identified with part of the resolution for $A$, we omit $0$ in the notation: 
\begin{equation*}
\cblue{
0\xrightarrow{\eta_3} \oplus_k A_{\mathbb{P}^2}\otimes_R c_k^*\otimes_R A_{\mathbb{P}^2}\xrightarrow{\eta_2} 
\begin{array}{c} \oplus_i A_{\mathbb{P}^2}\otimes a_i \otimes A_{\mathbb{P}^2}\\ 
\oplus_j A_{\mathbb{P}^2}\otimes b_j \otimes A_{\mathbb{P}^2} \\ 
\end{array}
\xrightarrow{\eta_1} A_{\mathbb{P}^2}\otimes_R A_{\mathbb{P}^2}\xrightarrow{\eta_0} A_{\mathbb{P}^2}\rightarrow 0}.
\end{equation*}
Note that here $c_k^*$ is just a notation for the generator $e_2\otimes e_0$ as an $A\otimes A$ bimodule with the outer bimodule structure. Let $F$ be a flat family of coherent sheaves on $Y$ with compact support over an affine scheme $S=Spec(B)$, assume that $R\mathcal{H}om_S(\pi^*T^0, F)$ is a $B\otimes A^0$ module, denote it by $M$. As a $B\otimes R$ module, denote $M$ by $V_0\oplus V_1\oplus V_2$. Then 
\begin{equation*}
\begin{split}
R\mathcal{H}om_S(F, F)
& \simeq R\mathcal{H}om_{B\otimes A^0}(M, M) \\
&\simeq \oplus_i V_i^*\otimes V_i \rightarrow 
\begin{array}{c} \oplus_i V_1^*\otimes a_i^* \otimes V_0\\ 
\oplus_j V_2^*\otimes b_j^* \otimes V_1 \\ 
\oplus_k V_0^*\otimes c_k^* \otimes V_2 
\end{array} \rightarrow 
\begin{array}{c} \oplus_i V_1\otimes a_i \otimes V_0^*\\ 
\oplus_j V_2\otimes b_j \otimes V_1^* \\ 
\oplus_k V_0\otimes c_k \otimes V_2^* \\
\end{array}\rightarrow 
\oplus_i V_i\otimes V_i^*.
\end{split}
\end{equation*}
Then $\pi_*M:= R\mathcal{H}om_S(\pi_{\mathbb{P}^2}^*T_{\mathbb{P}^2}^0, (id_S\times\pi)_*F)$ is a $B\otimes A^0_{\mathbb{P}^2}$ module, via the inclusion of $A^0_{\mathbb{P}^2}$ in $A^0$. As a $B\otimes R$ module, $\pi_*M\simeq V_0\oplus V_1\oplus V_2$.  
\begin{equation*}
\begin{split}
R\mathcal{H}om_S(\pi_*F, \pi_*F)
&\simeq R\mathcal{H}om_{B\otimes A^0_{\mathbb{P}^2}}(\pi_*M, \pi_*M)\\
&\simeq 0\rightarrow 
\begin{array}{c} 
\oplus_k V_0^*\otimes c_k^* \otimes V_2 
\end{array} \rightarrow 
\begin{array}{c} \oplus_i V_1\otimes a_i \otimes V_0^*\\ 
\oplus_j V_2\otimes b_j \otimes V_1^* \\ 
\end{array}\rightarrow 
\oplus_i V_i\otimes V_i^*.
\end{split}
\end{equation*}

Since $det(\oplus_k V_0^*\otimes c_k^* \otimes V_2)\simeq det(\oplus_k V_0\otimes c_k \otimes V_2^*)^*$, 
$det(R\mathcal{H}om_S(\pi_*F, \pi_*F))\simeq L_a|_S$. Since the isomorphism is canonical, we have $det(R\mathcal{H}om_{Coh^0}(\pi_*F, \pi_*F))\simeq L_a|_{Coh^0}$

This isomorphism is canonical for all n, so we actually have $L_g\simeq L_a|_{Coh_Y}$. 

\end{proof}
\bigskip
\bigskip

\end{document}